\newtheorem{theorem}{Theorem}
\newtheorem{lemma}{Lemma}
\theoremstyle{definition}
\newtheorem{definition}{Definition}
\theoremstyle{plain}
\newcommand{\vt}{\vspace{.1cm}}
\newcommand{\vtt}{\vspace{.2cm}}
\newcommand{\R}{\mathbb{R} }
\newcommand{\C}{\mathbb{C} }
\newcommand{\N}{\mathbb{N} }
\newcommand{\h}{\mathbb{H}}
\newcommand{\hf}{\mathbb{H}_{\mathbb F}^m}
\newcommand{\s}{\mathbb{S}}
\renewcommand{\rho}{\varrho}
\renewcommand{\theta}{\varTheta}
\renewcommand{\Theta}{\varTheta}
\renewcommand{\Sigma}{\varSigma}
\renewcommand{\tau}{\uptau}
\newcommand{\overbar}[1]{\mkern 1.5mu\overline{\mkern-1.5mu#1\mkern-1.5mu}\mkern 1.5mu}
\newcommand{\transv}{\mathrel{\text{\tpitchfork}}}
\newcommand{\tpitchfork}{%
  \vbox{
    \baselineskip\z@skip
    \lineskip-.52ex
    \lineskiplimit\maxdimen
    \m@th
    \ialign{##\crcr\hidewidth\smash{$-$}\hidewidth\crcr$\pitchfork$\crcr}
  }%
}
\begin{document}

\title[Translating Solitons in Riemannian Products]
{Translating Solitons to Flows by Powers of The Gaussian Curvature in Riemannian Products}
\author{Ronaldo Freire de Lima}
\address[A1]{Departamento de Matem\'atica - Universidade Federal do Rio Grande do Norte}
\email{ronaldo.freire@ufrn.br}
\subjclass[2010]{53B25 (primary), 53C42 (secondary).}
\keywords{translating soliton  -- Gaussian curvature --  Riemannian product.}
\maketitle

\begin{abstract}
  We consider translating solitons to flows by positive powers
  $\alpha$ of the Gaussian curvature  --- called $K^\alpha$-flows ---  in Riemannian products
  $M\times\mathbb R.$ We prove that, when  $M$ is the Euclidean space $\mathbb R^n,$ the
  sphere $\mathbb S^n,$ or one of the hyperbolic spaces $\mathbb{H}_{\mathbb F}^m,$ there exist complete
  rotational translating solitons to $K^\alpha$-flow in $M\times\mathbb R$ for certain values of $\alpha.$
\end{abstract}

\section{Introduction}
Over the last decades, the subject of extrinsic curvature flows in Riemannian manifolds
has experienced a significant development. Along this time,
special attention has been given to
mean curvature and   Gaussian curvature flows in Euclidean space,
resulting in achievements  such as the proof
of short time existence of solutions, and
of  their convergence (after rescaling)
to round spheres. (For a  thorough account of these and other extrinsic flows, we refer
to the recent book  \cite{andrewsetal}.) % by Andrews, Chow, Guenther, and Langford.)

Likewise,  flows by powers
of the Gaussian curvature, which we call $K^\alpha$-\emph{flows}, have been
considered by many authors, most notably  B. Andrews
(see, e.g.,
\cite{andrews-chen,andrews-guan-li,andrews-maccoy,brendle-choi-dasdalopoulos,choietal,li-wang,urbas}).
Among other reasons, the interest in $K^\alpha$-flows comes from the fact that
they naturally relate  to a wide range of research fields,
varying from image processing to affine geometry and geometric analysis (cf. \cite{andrews}).

In the context of extrinsic curvature flows in Euclidean space, there are special hypersurfaces,
called \emph{translating solitons}, which have the distinguished property of moving
by translation under such a  flow. In fact, the construction and classification of complete
translating solitons constitutes a major problem in this theory (see, e.g.,
\cite{choietal01,martinetal,spruck-xiao,urbas}).
On this regard, we point out that
J. Urbas \cite{urbas} established, for all $\alpha\in(0,1/2],$
the existence of complete rotational translating solitons to the $K^\alpha$-flow in Euclidean space.

More generally, translating solitons can be defined in Riemannian products $M\times\R,$ where
$M$ is an arbitrary Riemannian manifold. In \cite{lira-martin},
assuming $M$  complete  with non-positive sectional curvature and rotationally invariant
metric, Lira and Martín obtained a one-parameter family of rotational translating
solitons to the  mean curvature flow in $M\times\R.$
Also, in \cite{ortegaetal,lawn-ortega},
results of the same nature were obtained in the context of Lorentzian  products.
On the other hand, to our knowledge, there are no works
on translating solitons to $K^\alpha$-flows in  products $M\times\R.$

In this paper, we prove existence of complete rotational
translating solitons to the
$K^\alpha$-flow in $M\times\R$ (for certain positive values of $\alpha$)
when $M$ is the Euclidean space
$\R^n,$ the sphere $\s^n,$ or one of the hyperbolic spaces
$\hf$ (rank-one symmetric spaces of noncompact type). In particular,
we recover the aforementioned  result of Urbas, giving it a new proof.

We remark that the translating solitons we construct here are all
bowl-type graphs of radial functions. In the Euclidean and hyperbolic cases,
these graphs are entire, whereas
in the spherical case they project onto an open hemisphere $B$ of $\s^{n},$
being asymptotic to the  half-cylinder $\partial B\times[0,+\infty).$

The paper is organized as follows. In Section \ref{sec-preliminaries},
we fix  notation regarding hypersurfaces of Riemannian products $M\times\R.$ In Section
\ref{sec-graphs}, we discuss graphs defined on families of parallel hypersurfaces,
setting some formulae. In Section \ref{sec-Ksolitons}, we introduce the
$K^\alpha$-flow in $M\times\R$ and establish a key lemma. Finally, in
Section \ref{sec-mainresult}, we state and prove our main result.

\section{Preliminaries} \label{sec-preliminaries}
Given a Riemannian manifold $M^n,$ $n\ge 2,$  consider the Riemannian product
$M\times\R$ endowed with its standard metric
\[
\langle\,,\,\rangle=\langle\,,\,\rangle_{M}+dt^2,
\]
and denote its Levi-Civita connection by $\overbar\nabla.$

Let $\Sigma^n$ be an oriented hypersurface of  $M\times\R.$ Set
$N$ for its unit normal field and   $A$ for its shape operator  with respect to
$N,$ so that
\[
AX=-\overbar\nabla_XN,  \,\, X\in T\Sigma,
\]
where  $T\Sigma$ stands for  the tangent bundle of $\Sigma$.
The principal curvatures of $\Sigma,$ that is,
the eigenvalues of the shape operator  $A,$ will be denoted by $k_1\,, \dots ,k_n$.
In this setting, the \emph{Gaussian curvature} of $\Sigma$ is the function $K:\Sigma\rightarrow\R$ defined as
\[
K:=\det A=k_1\dots k_n\,.
\]

We denote by $\partial_t$ the  gradient  of the
projection $\pi_{\scriptscriptstyle \R}$ of $M\times\R$  on the factor $\R.$
Given a hypersurface  $\Sigma\subset M\times\R,$ its
\emph{height function} $\xi$ and its \emph{angle function} $\Theta$
are defined by the following identities:
\[
\xi(x):=\pi_{\scriptscriptstyle \R}|_\Sigma \quad\text{and}\quad \Theta(x):=\langle N(x),\partial_t\rangle, \,\, x\in\Sigma.
\]

We shall denote the gradient of $\xi$ on $\Sigma$ by $T,$ so that
\begin{equation} \label{eq-gradxi}
T=\partial_t-\Theta N.
\end{equation}

Given $t\in\R,$ the set  $P_t:=M\times\{t\}$ is called a \emph{horizontal hyperplane}
of $M\times\R.$ Horizontal hyperplanes are all isometric to $M$ and totally geodesic in
$M\times\R.$ In this context,
we call a transversal intersection $\Sigma_t:=\Sigma\transv P_t$ a \emph{horizontal section}
of $\Sigma.$ Any horizontal section $\Sigma_t$ is a hypersurface of $P_t$\,.
So, at any point $x\in\Sigma_t\subset\Sigma,$ the tangent space $T_x\Sigma$ of $\Sigma$ at $x$ splits
as the orthogonal sum
\begin{equation}\label{eq-sum}
T_x\Sigma=T_x\Sigma_t\oplus {\rm Span}\{T\}.
\end{equation}

\begin{definition}
A hypersurface  $\Sigma\subset M\times\R$ is called \emph{rotational}, if there exists
a fixed point $o\in M$ such that any connected component of
any horizontal section $\Sigma_t$ of $\Sigma$ is contained in a geodesic  sphere  of $M\times\{t\}$
with center at $o\times\{t\}.$
If so, the set $\{o\}\times\R$ is called the \emph{axis} of $\Sigma.$
\end{definition}

Recall that the rank-one symmetric spaces of non-compact type
are the hyperbolic spaces $\h_\R^m, \,\,\h_\C^m, \,\, \h_{\mathbb K}^m$ {and} $ \h_{\mathbb O}^2,$ $m\ge 1,$
called \emph{real hyperbolic space, complex hyperbolic space, quaternionic hyperbolic space} and
\emph{Cayley hyperbolic plane,} respectively.
We will adopt the unified notation $\hf$ for these  spaces with $m=2$ for $\mathbb F=\mathbb O.$
The real dimension  of $\hf$ is $n=m\dim \mathbb F.$

We remark that the hyperbolic spaces $\hf$ are all Einstein--Hadamard manifolds
whose sectional curvatures are negative and pinched. In particular, their
geodesic spheres are all strictly convex (see, e.g., \cite{berndtetal}).

\section{Graphs on  Parallel Hypersurfaces}  \label{sec-graphs}
Consider
an isometric immersion
\[f:M_0^{n-1}\rightarrow M^n\]
between two Riemannian manifolds $M_0^{n-1}$ and $M^n,$
and suppose that there is a neighborhood $\mathscr{U}$
of $M_0$ in $TM_0^\perp$  without focal points of $f,$ that is,
the restriction of the normal exponential map $\exp^\perp_{M_0}:TM_0^\perp\rightarrow M$ to
$\mathscr{U}$ is a diffeomorphism onto its image. In this case, denoting by
$\eta$ the unit normal field  of $f,$   there is an open interval $I\owns 0$
such that, for all $p\in M_0,$ the curve
\begin{equation}\label{eq-geodesic}
\gamma_p(s)=\exp_{\scriptscriptstyle M}(f(p),s\eta(p)), \, s\in I,
\end{equation}
is a well defined geodesic of $M$ without conjugate points. Thus,
for all $s\in I,$
\[
\begin{array}{cccc}
f_s: & M_0 & \rightarrow & M\\
     &  p       & \mapsto     & \gamma_p(s)
\end{array}
\]
is an immersion of $M_0$ into $M,$ which is said to be \emph{parallel} to $f.$
Observe that, given $p\in M_0$, the tangent space $f_{s_*}(T_p M_0)$ of $f_s$ at $p$ is the parallel transport of $f_{*}(T_p M_0)$ along
$\gamma_p$ from $0$ to $s.$ We also remark that,  with the induced metric,
the unit normal  $\eta_s$  of $f_s$ at $p$ is given by
\[\eta_s(p)=\gamma_p'(s).\]

\begin{definition}
Let $\phi:I\rightarrow \phi(I)\subset\R$ be an increasing diffeomorphism, i.e., $\phi'>0.$
With the above notation, we call the set
\begin{equation}\label{eq-paralleldescription1}
\Sigma:=\{(f_s(p),\phi(s))\in M\times\R\,;\, p\in M_0, \, s\in I\},
\end{equation}
the \emph{graph} determined by $\{f_s\,;\, s\in I\}$ and $\phi,$ or $(f_s,\phi)$-\emph{graph}, for short.
\end{definition}

For an arbitrary  point $x=(f_s(p),\phi(s))$ of an
$(f_s,\phi)$-{graph} $\Sigma,$ one has
\[T_x\Sigma=f_{s_*}(T_p M_0)\oplus {\rm Span}\,\{\partial_s\}, \,\,\, \partial_s=\eta_s+\phi'(s)\partial_t.\]
So, a  unit normal  to $\Sigma$ is
\begin{equation} \label{eq-normal}
N=\frac{-\phi'}{\sqrt{1+(\phi')^2}}\eta_s+\frac{1}{\sqrt{1+(\phi')^2}}\partial_t\,.
\end{equation}
In particular, its  angle function  is
\begin{equation} \label{eq-thetaparallel}
\Theta=\frac{1}{\sqrt{1+(\phi')^2}}\,\cdot
\end{equation}

As shown in  \cite[Theorem 6]{delima-roitman}), any  $(f_s,\phi)$-graph $\Sigma$
has the $T$-\emph{property}, meaning that
$T$ is a principal direction at any point of $\Sigma$.
More precisely,
\begin{equation}\label{eq-principaldirection}
AT=\frac{\phi''}{(\sqrt{1+(\phi')^2})^3}T.
\end{equation}

Conversely, any hypersurface of $M\times\R$ with non vanishing angle function having $T$ as a principal direction
is given locally as an $(f_s,\phi)$-graph.

Given an
$(f_s,\phi)$-graph $\Sigma,$
let $\{X_1\,,\dots ,X_n\}$ be
the orthonormal frame of principal directions of $\Sigma$
in which $X_n=T/\|T\|.$ In this case,
for $1\le i\le n-1,$
the fields $X_i$  are all horizontal, that is, tangent to $M$ (cf. \eqref{eq-sum}).
Therefore, setting
\begin{equation}\label{eq-rho}
\rho:=\frac{\phi'}{\sqrt{1+(\phi')^2}}
\end{equation}
and considering \eqref{eq-normal}, we have, for all $i=1,\dots ,n-1,$ that
\[
k_i=\langle AX_i,X_i\rangle=-\langle\overbar\nabla_{X_i}N,X_i\rangle=\rho\langle\overbar\nabla_{X_i}\eta_s,X_i\rangle=-\rho k_i^s,
\]
where $k_i^s$ is the $i$-th principal curvature of $f_s\,.$ Also,
it follows from \eqref{eq-principaldirection}
that $k_n=\rho'.$ Thus, the principal curvatures of the $(f_s,\phi)$-graph $\Sigma$
at $(f_s(p),\phi(s))\in\Sigma$ are
\begin{equation}\label{eq-principalcurvatures}
k_i=-\rho(s) k_i^s(p) \,\, (1\le i\le n-1) \quad\text{and}\quad k_n=\rho'(s).
\end{equation}
In particular, the Gaussian curvature $K$ of $\Sigma$ at $(f_s(p),\phi(s))$ is given by
\begin{equation} \label{eq-curvaturegraph}
K=(-\rho(s))^{n-1}K_s(p)\rho'(s)\,,
\end{equation}
where $K_s$ denotes the Gaussian curvature of
the hypersurface $f_s$\,.

We remark that  the function $\rho$ defined in \eqref{eq-rho} determines
the function $\phi.$
Indeed, it follows from equality \eqref{eq-rho} that
\begin{equation}\label{eq-phi10}
\phi(s)=\int_{s_0}^{s}\frac{\rho(u)}{\sqrt{1-\rho^2(u)}}du+\phi(s_0), \,\,\, s_0, s\in I.
\end{equation}

In addition, from \eqref{eq-thetaparallel} and \eqref{eq-rho},
the unit normal $N$ defined in \eqref{eq-normal} can be
written as $N=-\rho\eta_s+\theta\partial_t$\,. Hence, the relation
\begin{equation} \label{eq-theta2}
\theta=\sqrt{1-\rho^2}
\end{equation}
holds everywhere on any $(f_s,\phi)$-graph $\Sigma.$
In particular, one has $\rho=\|T\|.$

Now, let us assume that the family of parallel hypersurfaces
\[
\mathscr F:=\{f_s:M_0\rightarrow M\,;\, s\in I\}
\]
is  \emph{isoparametric}, that is,
for each $s\in I,$ any principal curvature $k_i^s$  of $f_s\in\mathscr F$ is  constant (possibly depending on $i$ and $s$).
We shall  assume further that each $f_s$ is strictly convex with $k_i^s<0.$

In this setting, the Gaussian curvature $K_s$ of each hypersurface $f_s$ is a (non vanishing) function of $s$ alone.
Hence, from \eqref{eq-curvaturegraph}, the same is true for any $(f_s,\phi)$-graph $\Sigma$ built on
$\mathscr F,$ that is,
\begin{equation}\label{eq-curvaturegraph2}
K(f_s(p),\phi(s))=(-\rho(s))^{n-1}\rho'(s)K_s\,,  \,\,\, (f_s(p),\phi(s))\in\Sigma.
\end{equation}

It should also be noticed that, by equalities \eqref{eq-principalcurvatures},
such a graph $\Sigma$ is strictly convex.

\section{Translating $K^\alpha$-Solitons in $M\times\R$}  \label{sec-Ksolitons}

Given a real number $\alpha\ne 0,$ we say that an oriented
strictly convex hypersurface $\Sigma$ of $M\times\R$
\emph{moves under} $K^\alpha$-flow if there exists a one-parameter
family of immersions $F\colon M_0\times[0,u_0)\rightarrow M\times\R,$ $u_0\le+\infty,$
such that
\begin{equation} \label{eq-Kalphaflow}
\left\{
\begin{array}{l}
\frac{\partial F}{\partial u}^\perp(p,u)=K^\alpha(p,u)N(p,u).\\[1ex]
F(M_0\,, 0)=\Sigma,
\end{array}
\right.
\end{equation}
where $N(p,u)$ is the inward %(i.e, pointing towards the ``convex side'')
unit normal to the hypersurface $F_u:=F(.\,, u)$,
$K(p,u)$ is the Gaussian curvature
of $F_u$ with respect to $N_u:=N(.\,, u),$ and
$\frac{\partial F}{\partial u}^\perp$ denotes the normal component of
$\frac{\partial F}{\partial u},$  i.e.,
\[
\frac{\partial F}{\partial u}^\perp=\left\langle\frac{\partial F}{\partial u},N_u\right\rangle N_u\,.
\]

In particular, the first equality in \eqref{eq-Kalphaflow} is equivalent to
\begin{equation}  \label{eq-condition}
\left\langle\frac{\partial F}{\partial u}(p,u),N(p,u)\right\rangle=K^\alpha(p,u).
\end{equation}
We call such a map $F$  a $K^\alpha$-\emph{flow} in $M\times\R.$

Denote by $\exp$ the exponential map of $M\times\R$ and
consider an isometric immersion $F_0\colon M_0\rightarrow M\times\R$.
%We say that the hypersurface $\Sigma=F_0(M_0)$ is a \emph{translating $K^\alpha$-soliton} if the
%\emph{vertical translation}
Define then the map
\[
F(p,u):=\exp_{F_0(p)}(u\partial_t), \,\,(p,u)\in M_0\times [0,+\infty),
\]
and notice that, for each $u\in (0,+\infty),$ the hypersurface  $F(M_0\,, u)$ is nothing
but a vertical translation of $\Sigma:=F(M_0,0).$ Since vertical translations are isometries of
$M\times\R$ keeping the second factor invariant,
we have that $\Sigma$ and $F(M_0\,, u)$
are congruent with coinciding angle functions and Gaussian curvature, that is,
\begin{equation} \label{eq-angles}
\theta(p,u)=\theta(p,0) \quad\text{and}\quad K(p,u)=K(p,0)\,\,\, \forall (p,u)\in M_0\times [0,u_0).
\end{equation}

Now, differentiating $F$ with respect to $u,$  we have
\begin{equation} \label{eq-partialt}
\frac{\partial F}{\partial u}(p,u)=(d\exp_{F_0(p)})(u\partial_t)\partial_t=\partial_t\,.
\end{equation}

From \eqref{eq-angles} and \eqref{eq-partialt}, we have that
$F$ satisfies \eqref{eq-condition} if and only if
the  equality
\[
\theta (p,0)=K^\alpha(p,0)
\]
holds on $M_0$\,. This fact motivates the following concept.

\begin{definition}
Given  $\alpha> 0,$ we say that an oriented
strictly convex hypersurface $\Sigma$ of the
Riemannian product $M\times\R$
is a \emph{translating soliton to the $K^\alpha$-curvature flow} (or simply
a \emph{translating $K^\alpha$-soliton}) if the equality
\[
K^\alpha=\theta
\]
holds everywhere on $\Sigma.$ (From the above discussion, any such
hypersurface $\Sigma$ is the initial data of a $K^\alpha$-flow by vertical translations.)
\end{definition}

Let us consider now an $(f_s,\phi)$-graph $\Sigma$
in $M\times\R$ such that the family
\[
\mathscr F:=\{f_s:M_0\rightarrow M\,;\, s\in I\}
\]
is isoparametric with $k_i^s<0.$  In this case,
it follows from \eqref{eq-theta2} and \eqref{eq-curvaturegraph2} that
$\Sigma$ is a translating $K^\alpha$-soliton if and only if its associated
$\rho$ function satisfies %the following equality
\begin{equation} \label{eq-edo0}
(-\rho(s))^{n-1}\rho'(s)K_s=(1-\rho^2(s))^{\frac{1}{2\alpha}}, \,\,\, s\in I.
\end{equation}

Since $K_s$ never vanishes on $I,$  we have that
\eqref{eq-edo0} holds for $\rho$ if and only if
\begin{equation} \label{eq-odetau}
\tau'(s)=\frac{(-1)^{n-1}n(1-\tau^{\frac2n})^{\frac{1}{2\alpha}}}{K_s}\,, \quad \tau:=\rho^n.
\end{equation}

Summarizing, we have the following result.
\begin{lemma} \label{lem-parallel}
Let $\Sigma$ be an $(f_s,\phi)$-graph in $M\times\R$ whose associated
family
\[\mathscr F:=\{f_s:M_0\rightarrow M\,;\, s\in I \}\]
of oriented parallel  hypersurfaces  is isoparametric with each of them having negative principal curvatures.
Then, $\Sigma$ is a translating $K^\alpha$-soliton if and only if  \eqref{eq-odetau}
holds for $\tau:=\rho^{n},$
where $\rho:I\rightarrow\R$ is as in \eqref{eq-rho}.
\end{lemma}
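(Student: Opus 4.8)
The plan is to unwind the definitions and assemble the equivalences that have already been established in Sections \ref{sec-graphs} and \ref{sec-Ksolitons}. By the Definition preceding the lemma, $\Sigma$ is a translating $K^\alpha$-soliton precisely when $K^\alpha=\theta$ holds everywhere on $\Sigma$. Since the hypothesis is exactly the setting of \eqref{eq-curvaturegraph2} and \eqref{eq-theta2}, I would first substitute those two identities into $K^\alpha=\theta$. This yields the ODE \eqref{eq-edo0}, namely $(-\rho(s))^{n-1}\rho'(s)K_s=(1-\rho^2(s))^{1/2\alpha}$, as the defining condition on the function $\rho$ associated to $\Sigma$ via \eqref{eq-rho}. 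I should note in passing that $K^\alpha=\theta$ makes sense with $\theta>0$ here because $\Sigma$ is strictly convex (so $K>0$) and, by the last remark of Section \ref{sec-graphs}, the graph over a convex isoparametric family is strictly convex; this is what lets us raise $1-\rho^2$ to the power $1/(2\alpha)$.

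The second step is the change of variable $\tau:=\rho^n$. Since $\tau'=n\rho^{n-1}\rho'$, multiplying \eqref{eq-edo0} through by $(-1)^{n-1}n$ and dividing by $K_s$ (legitimate because, under the isoparametric hypothesis with $k_i^s<0$, the Gaussian curvature $K_s$ of each leaf $f_s$ is a nowhere-vanishing function of $s$, by \eqref{eq-curvaturegraph2}) converts \eqref{eq-edo0} into
\[
\tau'(s)=\frac{(-1)^{n-1}n\,(1-\tau^{2/n})^{1/2\alpha}}{K_s},
\]
which is exactly \eqref{eq-odetau}. Conversely, any $\rho$ for which $\tau=\rho^n$ satisfies \eqref{eq-odetau} satisfies \eqref{eq-edo0}, and hence the associated graph is a translating $K^\alpha$-soliton. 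So the equivalence is symmetric and the two manipulations above are reversible.

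There is essentially no obstacle here: the lemma is a bookkeeping statement that records, in a single clean ODE, the content of formulas \eqref{eq-curvaturegraph2}, \eqref{eq-theta2}, \eqref{eq-edo0}, and \eqref{eq-odetau}. The only point requiring a line of care is the reversibility of the substitution $\tau=\rho^n$: one needs $\rho$ to be (say) nonnegative on $I$ so that $\rho$ is recovered from $\tau$ as $\tau^{1/n}$, but this is automatic since $\rho=\phi'/\sqrt{1+(\phi')^2}$ with $\phi'>0$ (indeed $\rho=\|T\|\ge 0$ by \eqref{eq-theta2}), so raising to the $n$-th power loses no information. With that observation the proof is complete.
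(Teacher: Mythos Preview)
Your proposal is correct and follows essentially the same approach as the paper: the lemma is stated there as a summary of the preceding discussion, which substitutes \eqref{eq-curvaturegraph2} and \eqref{eq-theta2} into the soliton condition $K^\alpha=\theta$ to obtain \eqref{eq-edo0}, and then uses the nonvanishing of $K_s$ and the substitution $\tau=\rho^n$ to pass to \eqref{eq-odetau}. Your added remarks on strict convexity and on the reversibility of $\tau=\rho^n$ (via $\rho\ge 0$) are helpful clarifications but do not change the argument.
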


\section{The Main Result} \label{sec-mainresult}
For $n\ge 2$ and $m\ge 1,$ let $M$ be one of the following manifolds, each of them endowed with its canonical
Riemannian metric:  Euclidean space $\R^n,$ the sphere $\s^n,$ or one of the hyperbolic spaces $\hf.$
%It is well known that
%the geodesic spheres of $M$ are all strictly convex and isoparametric.
Then, define
\begin{equation} \label{eq-radius}
\mathcal R_M:=\left\{
\begin{array}{ccc}
+\infty & \text{if} & M\ne\s^n\\[1ex]
\pi/2   & \text{if} & M=\s^n
\end{array}
\right.
\end{equation}
and consider a family
\begin{equation} \label{eq-Fgeodesicspheres}
\mathscr F:=\{f_s:\s^{n-1}\rightarrow M\,; \,\,\, s\in (0,\mathcal R_M)\}
\end{equation}
of  concentric geodesic spheres of $M$ indexed by their radiuses, that is,
for a fixed point $o\in M$, and for each $s\in (0,\mathcal R_M),$
$f_s(\s^{n-1})$ is the (strictly convex) geodesic sphere $S_s(o)$ of $M$ with center at $o$ and radius $s.$
In accordance to the notation of Section \ref{sec-graphs}, for each $s\in (0,\mathcal R_M),$
we  choose the outward orientation of $f_s$\,,
so that \emph{any principal curvature $k_i^s$ of \,$f_s$ is negative}.

For $M=\hf,$ %the function $s\in (0,+\infty) \mapsto K_s$ is positive for
the principal curvatures $k_i^s$ of the geodesic spheres $f_s\in\mathscr F$ are:
\begin{equation} \label{eq-princcurvhf}
\begin{aligned}
    k_1^s &= -\frac{1}{2}\coth(s/2) \,\,\,\text{with multiplicity}\,\,\,  n-p-1. \\[1ex]
    k_2^s &=-\coth(s) \,\,\, \text{with multiplicity} \,\,\,  p,
  \end{aligned}
\end{equation}
where $n=\dim\hf,$ $p=n-1$ for $\h^n$,  $p=1$ for $\h_\C^m$, $p=3$ for $\h_{\mathbb K}^m$, and
$p=7$ for $\h_{\mathbb O}^2$  (see, e.g., \cite[pgs. 353, 543]{cecil-ryan} and \cite{kimetal}).
Thus, the Gaussian curvature $K_s$ of the geodesic sphere $S_s(o)$ of $\hf$ is given by
\[
K_s=(-1)^{n-1}\left(\frac{1}{2}\coth(s/2)\right)^{n-p-1}(\coth s)^p\,.
\]
In particular, the function
\[
\frac{(-1)^{n-1}}{K_s}=(2\tanh(s/2))^{n-p-1}(\tanh s)^p
\]
is well defined and nonnegative  on $[0,+\infty).$

As is well known, in the cases $M=\R^n$ and $M=\s^n$ one has
\[
\frac{(-1)^{n-1}}{K_s}=s^{n-1} \quad\text{and}\quad \frac{(-1)^{n-1}}{K_s}=(\tan s)^{n-1},
\]
respectively.

It is easily seen that, in any of these three cases, the equality
\begin{equation} \label{eq-limitintegral}
\lim_{s\rightarrow\mathcal R_M}\int_{0}^{s}\frac{(-1)^{n-1}}{K_v}dv=+\infty.
\end{equation}
holds.
Finally, set
\begin{itemize}[parsep=2ex]
\item $\delta_n:=\,\,\left\{
\begin{array}{ccc}
1/2 & \text{if} & n=2.\\[1ex]
\max\{\frac14,\frac{1}{n-1}\}  & \text{if} & n>2.\\[1ex]
\end{array}
\right.
$

\item $
\mathcal I_M:=\left\{
\begin{array}{lcc}
(0,1/2] & \text{if} & M\ne\s^n.\\[1ex]
[\delta_n,1/2]   & \text{if} & M=\s^n.
\end{array}
\right.
$
\end{itemize}
\vt

With this notation, we now state and prove  our main result.

\begin{theorem} \label{th-solitoHn}
Let $M$ and $\mathcal I_M\subset (0,1/2]$ be as above.
Then, for all $\alpha\in\mathcal I_M,$ there exists a complete rotational
strictly convex translating $K^\alpha$-soliton in the closed half-space $M\times[0,+\infty),$
whose height function is unbounded. In addition, the following assertions hold:
\begin{itemize}[parsep=1ex]
    \item For $M\ne\s^n,$ $\Sigma$ is  an entire graph over $M.$
    \item For $M=\s^n,$ $\Sigma$ is a graph over an open hemisphere $B\subset\s^n,$
          being  asymptotic to the half-cylinder $\partial B\times[0,+\infty).$
\end{itemize}
\end{theorem}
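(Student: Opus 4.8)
The plan is to reduce everything to the single ODE \eqref{eq-odetau} for $\tau=\rho^n$ guaranteed by Lemma~\ref{lem-parallel}, solve it by an explicit integral, and then verify that the resulting $\rho$ stays in $(0,1)$ and that the associated $\phi$ diverges. First I would record that, by Lemma~\ref{lem-parallel}, constructing the soliton amounts to finding $\tau:(0,\mathcal R_M)\to\R$ solving
\[
\tau'(s)=\frac{(-1)^{n-1}n\bigl(1-\tau^{2/n}\bigr)^{1/(2\alpha)}}{K_s}
\]
with $0<\tau<1$; the positive quantity $(-1)^{n-1}/K_s$ computed in the excerpt makes the right-hand side nonnegative and well defined on $[0,\mathcal R_M)$. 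I would impose $\tau(0)=0$ (so $\rho(0)=0$, matching a smooth bowl with horizontal tangent at the vertex), separate variables, and get $s$ as a function of $\tau$ via
\[
G(\tau):=\int_0^\tau\frac{dw}{n\bigl(1-w^{2/n}\bigr)^{1/(2\alpha)}}=\int_0^s\frac{(-1)^{n-1}}{K_v}\,dv=:H(s).
\]
Here $H$ is a strictly increasing diffeomorphism of $[0,\mathcal R_M)$ onto $[0,+\infty)$ by \eqref{eq-limitintegral}, so the soliton exists globally on $(0,\mathcal R_M)$ precisely when $G(1)=\int_0^1\frac{dw}{n(1-w^{2/n})^{1/(2\alpha)}}=+\infty$, i.e.\ when the integrand is non-integrable at $w=1$. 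Since $1-w^{2/n}\sim \frac{2}{n}(1-w)$ as $w\to1^-$, the integral diverges iff $\frac{1}{2\alpha}\ge 1$, that is iff $\alpha\le 1/2$ — which is exactly the upper constraint in $\mathcal I_M$. When $G(1)=+\infty$, $\tau=G^{-1}\circ H$ is defined on all of $[0,\mathcal R_M)$, is smooth, increasing, and satisfies $0<\tau<1$, so $\rho=\tau^{1/n}\in(0,1)$ everywhere and $\theta=\sqrt{1-\rho^2}>0$.

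Next I would recover $\phi$ from $\rho$ through \eqref{eq-phi10}, $\phi(s)=\int_0^s\rho(u)/\sqrt{1-\rho^2(u)}\,du$, and argue that $\phi$ is an increasing diffeomorphism onto $[0,+\infty)$: near $s=0$ one checks $\rho(s)\to0$ so the integrand is integrable there (giving a genuine smooth vertex after the standard rotational-graph smoothness check), while as $s\to\mathcal R_M$ one shows $\rho(s)\to1$ fast enough that $\rho/\sqrt{1-\rho^2}$ is non-integrable, forcing $\phi(s)\to+\infty$ and hence an unbounded height function. For $M\ne\s^n$ we have $\mathcal R_M=+\infty$, so the domain of the radial parameter is all of $[0,+\infty)$ and $\Sigma$ is an entire graph over $M$; for $M=\s^n$ the parameter $s$ ranges over $[0,\pi/2)$, so $\Sigma$ is a graph over the open hemisphere $B=\{s<\pi/2\}$, and since $\rho\to1$, $\theta\to0$ as $s\to\pi/2$ the normal becomes horizontal, which — together with $\phi\to+\infty$ — yields the claimed asymptoticity to the half-cylinder $\partial B\times[0,+\infty)$; strict convexity is automatic from \eqref{eq-principalcurvatures} since all $k_i^s<0$, $\rho>0$, and $\rho'>0$.

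The one remaining piece is the lower bound on $\alpha$ in the spherical case, i.e.\ why $\alpha\ge\delta_n$ is needed when $M=\s^n$. Here $(-1)^{n-1}/K_s=(\tan s)^{n-1}$ blows up as $s\to\pi/2$, so one must be careful that the composition $\tau=G^{-1}\circ H$ does not reach the value $1$ before $s$ reaches $\pi/2$: near $s=\pi/2$, $H(s)\sim (n-1)^{-1}(\pi/2-s)^{-(n-1)}\to+\infty$ like a negative power, while near $\tau=1$, $G^{-1}$ approaches $1$ at a rate governed by $\alpha$, and matching the blow-up rates forces $\alpha$ to be at least the stated threshold (and similarly a delicate estimate is needed to ensure $\phi\to+\infty$ rather than converging). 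For $M=\R^n$ ($H(s)\sim s^n/n$, polynomial) and $M=\hf$ ($H(s)$ linear at infinity) no such lower bound arises, which is why $\mathcal I_M=(0,1/2]$ there. I expect this rate-matching near the ``equator'' $s=\pi/2$ — quantifying how $1-\tau$ decays and checking it against the $(\tan s)^{n-1}$ singularity to pin down $\delta_n=\max\{1/4,1/(n-1)\}$ (with the $n=2$ exception) — to be the main technical obstacle; everything else is the soft ODE/diffeomorphism bookkeeping sketched above, plus the standard verification that a rotational graph with $\rho(0)=0$, $\rho'(0)$ finite closes up smoothly at the axis.
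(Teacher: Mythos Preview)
Your reduction to the separated ODE, the construction $\tau=G^{-1}\circ H$, the divergence criterion $G(1)=+\infty\Leftrightarrow\alpha\le 1/2$, and the treatment of the non-spherical cases all match the paper's proof. The gap is in the spherical case, and it stems from a misdiagnosis of where the lower bound $\alpha\ge\delta_n$ enters. You write that one must ``be careful that the composition $\tau=G^{-1}\circ H$ does not reach the value $1$ before $s$ reaches $\pi/2$,'' but this cannot happen: you already established that for $\alpha\le 1/2$ the map $G$ is a diffeomorphism of $[0,1)$ onto $[0,+\infty)$, so $G^{-1}$ lands in $[0,1)$ no matter how violently $H(s)$ blows up. The existence and global definition of $\tau$ on $[0,\pi/2)$ with $0\le\tau<1$ needs only $\alpha\le 1/2$; no lower threshold is involved there.

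The entire role of $\delta_n$ is to force $\phi(s)\to+\infty$ as $s\to\pi/2$, which you flag only parenthetically and do not prove. The paper's argument is not a rate-matching of $1-\tau$ against $(\tan s)^{n-1}$; it splits $\delta_n=\max\{1/4,1/(n-1)\}$ into two separate ingredients. First, one shows $\tau'$ (hence $\rho'$) is \emph{bounded} on $(0,\pi/2)$: assuming $\tau'(s_k)\to+\infty$ along a sequence with $\tau''(s_k)>0$, the expression for $\tau''$ forces $\mu_2(s_k):=(1-\tau^{2/n})\tan s_k\to+\infty$, and then the inequality $(1-\tau^{2/n})^{p-1}\ge 1-\tau^{2/n}$ (valid because $\alpha\ge 1/4$ gives $p-1\le 1$) yields $\tau'(s_k)/\mu_2(s_k)\to+\infty$, contradicting $\tau''(s_k)>0$. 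Second, with $\rho'<C$ in hand, one uses the soliton relation $\rho/\sqrt{1-\rho^2}=(\tan s)^{\alpha(n-1)}\rho^{1-\alpha(n-1)}(\rho')^{-\alpha}$; the condition $\alpha(n-1)\ge 1$ (i.e.\ $\alpha\ge 1/(n-1)$) makes the right side at least $C^{-\alpha}\tan s$, so $\phi(s)\ge C^{-\alpha}\int_0^s\tan u\,du\to+\infty$. (For $n=2$ the paper simply checks $\rho(s)=\sin s$ by hand.) Your proposal contains neither of these steps, and your heuristic points in the wrong direction for where the obstacle lies.
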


\begin{proof}
Let $\mathscr F$ be a family of parallel geodesic spheres
of $M$ as in \eqref{eq-Fgeodesicspheres}. We intend to
determine a function $\phi\in C^{\infty}[0,\mathcal R_M)$ such that
the corresponding $(f_s,\phi)$-graph $\Sigma$ becomes the desired
$K^\alpha$-soliton. With this purpose, let us consider the equality
\eqref{eq-odetau} as an ODE with variable $\tau,$ and rewrite it as
\begin{equation} \label{eq-edotau2}
\frac{d\tau}{(1-\tau^{\frac2n})^{\frac{1}{2\alpha}}}=\frac{(-1)^{n-1}n}{K_s}ds.
\end{equation}

In order to solve \eqref{eq-edotau2}, consider  the functions
\[
\Phi\colon [0,1)\rightarrow [0,+\infty)\quad\text{and}\quad \Psi\colon [0,\mathcal R_M)\rightarrow [0,+\infty)
\]
given by
\[
\Phi(\tau):=\int_{0}^{\tau}\frac{du}{(1-u^{\frac2n})^{\frac{1}{2\alpha}}}\qquad\text{and}\qquad \Psi(s):=\int_{0}^{s}\frac{(-1)^{n-1}n}{K_v}dv.
\]

It follows from  \eqref{eq-limitintegral} that $\Psi$ is a diffeomorphism.
Let us show that the same is true for $\Phi.$
Indeed, $\Phi$   is a $C^\infty$ function with positive derivative,
which implies that it is a diffeomorphism over its image. So, it remains to prove
that $\Phi([0,1))=[0,+\infty).$ To this end, notice first that
$1-u^{2/n}\le 1-u\,\forall u\in[0,1).$ Therefore, setting $p:=1/(2\alpha)\ge 1,$
one has
\[
\Phi(\tau)\ge\int_{0}^{\tau}\frac{du}{(1-u)^{p}}=
\left\{
\begin{array}{ccc}
  \log\left(\frac{1}{1-\tau}\right) & \text{if} & p=1. \\[2ex]
  \frac{(1-\tau)^{1-p}-1}{p-1} & \text{if} & p> 1,
\end{array}
\right.
\]
which implies that $\Phi(\tau)\rightarrow +\infty$ as
$\tau\rightarrow 1.$ Hence, $\Phi([0,1))=[0,+\infty),$ as asserted.

Now, we can define $\tau:[0,\mathcal R_M)\rightarrow\R$ by
\begin{equation}\label{eq-taugeral}
  \tau(s)=\Phi^{-1}(\Psi(s)),
\end{equation}
which is clearly a solution of \eqref{eq-edotau2} (and so of \eqref{eq-odetau})  satisfying
\begin{equation} \label{eq-taulimit1}
0=\tau(0)\le\tau(s)<1 \,\,\,\, \forall  s\in[0,\mathcal R_M) \quad\text{and}\quad \lim_{s\rightarrow\mathcal R_M}\tau(s)=1.
\end{equation}

Therefore, by Lemma \ref{lem-parallel}, the corresponding $(f_s,\phi)$-graph $\Sigma$ is a translating $K^\alpha$-soliton
in $M\times\R.$ Moreover, since $\rho={\tau}^{1/n}$, we have from \eqref{eq-phi10} (with $s_0=0$) and \eqref{eq-taulimit1}
that the  function $\phi$  is defined in $[0,\mathcal R_M)$ and satisfies
\[
\phi(0)=\phi'(0)=0 \quad\text{and}\quad \lim_{s\rightarrow\mathcal R_M}\phi'(s)=+\infty.
\]

We conclude from the above discussion that, if $M\ne\s^n$,
then $\Sigma$ is an unbounded entire graph over $M$  which is
contained in the closed half-space $M\times[0,+\infty).$
In particular, $\Sigma$ is complete (Fig. \ref{fig-soliton1}).

\begin{figure}[htbp]
\includegraphics{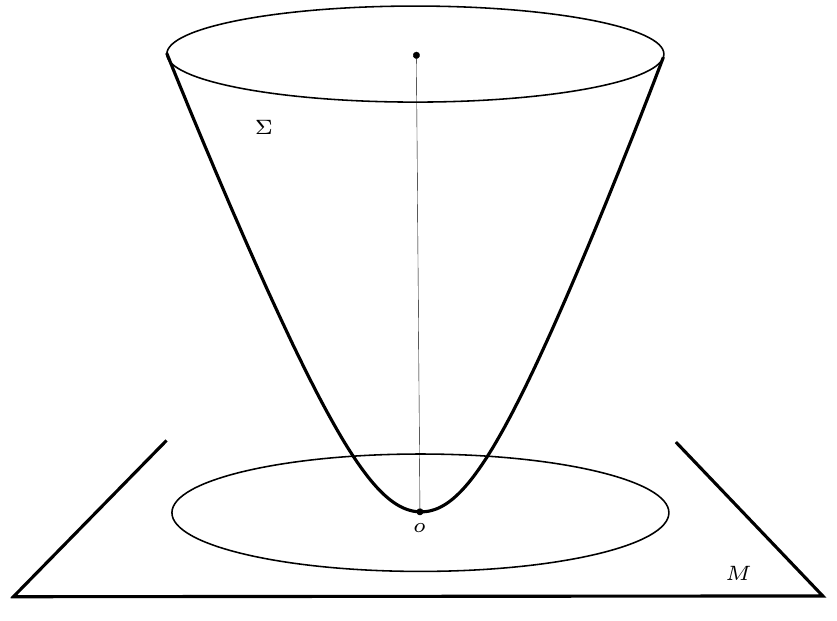}
\caption{\small A  translating $K^\alpha$-soliton in $M\times\R.$}
\label{fig-soliton1}
\end{figure}

If  $M=\s^n$, then $\Sigma$ is a graph over the open hemisphere
$B$ centered at $o\in\s^n$ (Fig. \ref{fig-soliton2}).
Thus, to conclude that $\Sigma$ is complete and
asymptotic to $\partial B\times[0,+\infty),$ we must
prove that $\phi$ is unbounded.

For  $n=2,$ we have from the hypothesis that $\alpha=1/2.$ In this case, it is easily checked that
$\rho(s)=\sin s$ is the solution of \eqref{eq-edo0}
satisfying $\rho(0)=0.$ So, from \eqref{eq-phi10}, 
\[
\phi(s)=\int_{0}^{s}\frac{\rho(u)}{\sqrt{1-\rho^2(u)}}du=\int_{0}^{s}\tan (u)du=\log\left(\frac{1}{\cos s} \right), \,\,\,\, s\in(0,\pi/2),
\]
which implies that $\phi(s)\rightarrow+\infty$ as $s\rightarrow\pi/2.$ 

For $n>2,$ let us show first
that  $\tau'$ is bounded in $(0,\pi/2).$
Indeed, from \eqref{eq-odetau}, 
$$\tau'(s)=n(\tan s)^{n-1}(1-(\tau(s))^{2/n})^{p}, \quad p=\frac{1}{2\alpha}\cdot$$
So, setting  
\[
\mu_1(s):=n(\tan s)^{n-2}(\sec s)^{2}(1-(\tau(s))^{2/n})^p \quad\text{and}\quad \mu_2(s):=(1-(\tau(s))^{\frac2n})\tan s,
\]
a straightforward calculation  yields 
\begin{eqnarray} \label{eq-tau''}
\tau''(s) & = & \mu_1(s)\left[(n-1)-\frac{2p(\tau(s))^{\frac{2-n}{n}}(\cos s)^2(\tan s) \tau'(s)}{n(1-(\tau(s))^{\frac2n})} \right]\nonumber\\
          & = & \mu_1(s)\left[(n-1)-\frac{2p(\tau(s))^{\frac{2-n}{n}}(\sin s)^2 \tau'(s)}{n\mu_2(s)}\right], \,\,\, s\in(0,\pi/2).
\end{eqnarray}

Let us suppose, by contradiction, that there exists a sequence $(s_k)$ in
$(0,\pi/2)$ such that $\tau'(s_k)\rightarrow+\infty.$ We can assume, without loss of generality, that
$\tau''(s_k)>0$ for all $k\in\N.$ Under this assumption, we have from \eqref{eq-tau''} that
%$\limsup \mu_2(s_k)=+\infty.$
the sequence $\mu_2(s_k)$ is unbounded above.
Otherwise, for a sufficiently large $k,$ the expression in the brackets
would be negative for $s=s_k,$ which would give $\tau''(s_k)<0$  --- a contradiction. 

\begin{figure}[htbp]
\includegraphics{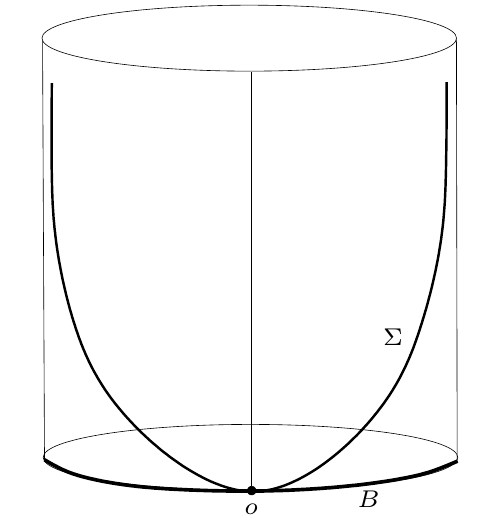}
\caption{\small A translating $K^\alpha$-soliton in $\s^n\times\R.$}
\label{fig-soliton2}
\end{figure}

So, we can assume $\mu_2(s_k)\rightarrow+\infty.$
However, from our choice of $\alpha,$ we have that $0\le p-1\le 1,$  which yields 
$(1-(\tau(s_k))^{2/n})^{p-1}\ge (1-(\tau(s_k))^{2/n}).$ Thus, 
\[
\frac{\tau'(s_k)}{\mu_2(s_k)}=n(\tan s_k)^{n-2}(1-(\tau(s_k))^{2/n})^{p-1}\ge n(\tan s_k)^{n-3}\mu_2(s_k)\rightarrow+\infty,
\]
which, together with equality \eqref{eq-tau''}, gives $\tau''(s_k)<0$ for a sufficiently large $k$ --- again a contradiction.
Therefore, $\tau'$ (and so  $\rho'$) is  bounded in $(0,\pi/2).$

Now, let  $C>0$  be such that $\rho'(s)<C$ for all $s\in (0,\pi/2).$
Since we are assuming $\alpha(n-1)\ge 1,$  equalities \eqref{eq-phi10} and  \eqref{eq-edo0} yield
\[
\phi(s)=\int_{0}^{s}\frac{\rho(u)}{\sqrt{1-\rho^2(u)}}du=\int_{0}^{s}\frac{(\tan u)^{\alpha(n-1)}}{(\rho(u))^{\alpha(n-1)-1}(\rho'(u))^\alpha}du\ge
\frac{1}{C^\alpha}\int_{0}^{s}\tan u\, du,
\]
which implies that $\phi(s)\rightarrow+\infty$ as $s\rightarrow\pi/2.$
This concludes the proof.
\end{proof}

We finish by noting that, for a given  Riemannian manifold $M,$ we have
from Lemma \ref{lem-parallel} that there exist local
translating $K^\alpha$-solitons in
$M\times\R$ as long as $M$ admits families of isoparametric,
strictly convex hypersurfaces. This applies, for instance, to
the hyperbolic spaces $\hf$ and their  families of
parallel horospheres (see Proposition-(vi), pg 88, in \cite{berndtetal}), to
the real hyperbolic space $\h_\R^n$ and its families
of equidistant hypersurfaces to
a given totally geodesic hyperplane, as well as  to
$\s^n$ and any of its many  families
of strictly convex isoparametric hypersurfaces (cf. \cite{dominguez-vazquez} and the references therein).

\vtt
\noindent
\emph {Acknowledgements.} The author is indebt to Isabel L. Rios and Roberto
T. de Oliveira for helpful conversations during the preparation of this paper.


\begin{thebibliography}{99}

\bibitem{andrews} B. Andrews:  Motion of hypersurfaces by Gauss curvature, Pacific J. Math. {\bf 195} (2000), no. 1,
1--34.
\vt
\bibitem{andrews-chen} B. Andrews,  X. Chen: Surfaces moving by powers of Gauss curvature, Pure Appl. Math.
{\bf 8} (2012), no. 4, 825--834.
\vt
\bibitem{andrewsetal} B. Andrews, B. Chow, C. Guenther, M. Langford: Extrinsic Geometric Flows.
Graduate studies in mathematics {\bf 206},  American Mathematical Society (2020).
\vt
\bibitem{andrews-guan-li} B. Andrews, P. Guan,  L. Ni: Flow by powers of the Gauss curvature, Adv. Math. {\bf 299}
(2016), 174--201.
\vt
\bibitem{andrews-maccoy} B. Andrews, J. McCoy: Convex hypersurfaces with pinched principal curvatures and flow
of convex hypersurfaces by high powers of curvature, Trans. Amer. Math. Soc. {\bf 364} (2012),
no. 7, 3427--3447.
\vt
\bibitem{berndtetal} J. Berndt, F. Tricerri, L. Vanhecke: Generalized Heisenberg groups and Damek-Ricci harmonic spaces.
Lecture Notes in Mathematics {\bf 1598}, Springer Verlag (1995).
\vt
\bibitem{brendle-choi-dasdalopoulos} S. Brendle, K. Choi,  P. Daskalopoulos: Asymptotic behavior of flows by
powers of the Gaussian curvature, Acta Math. {\bf 219} (2017), no. 1, 1--16.
\vt
\bibitem{cecil-ryan} T. Cecil, P. Ryan: Geometry of hypersurfaces. Springer Verlag (2015).
\vt
\bibitem{choietal} K. Choi, P. Daskalopoulos, L. Kim,  K.-A. Lee: The evolution of complete non-compact
graphs by powers of Gauss curvature, J. Reine Angew. Math. {\bf 757} (2019), 131--158.
\vt
\bibitem{choietal01} K. Choi, P. Daskalopoulos, K.-A. Lee: Translating solutions to the Gauss curvature
flow with flat sides. Anal. PDE {\bf 14} (2)  (2021), 595--616.
\vt
\bibitem{delima-roitman} R. F. de Lima, P. Roitman: Helicoids and catenoids in
$M\times\R.$  Ann. Mat. Pura Appl. {\bf 200},  2385--2421  (2021).
\vt
\bibitem{dominguez-vazquez}  M. Domínguez-Vázquez: An introduction to isoparametric foliations.
Preprint (2018) (available at: http://xtsunxet.usc.es/miguel/teaching/jae2018.html).
\vt
\bibitem{kimetal} S. Kim, Y. Nikolayevsky, J. Park: Einstein hypersurfaces of the Cayley projective plane.
Differential Geom. Appl. {\bf 69},  (2020), 1--6.
\vt
\bibitem{ortegaetal} E. Kocakusakli, M. Ortega: Extending translating solitons in semi-Riemannian manifolds, in:
 Lorentzian Geometry and Related Topics, in: Springer Proc. Math. Stat., vol. {\bf 211}, (2016).
\vt
\bibitem{lawn-ortega} M.A. Lawn, M. Ortega, Translating solitons from semi-Riemannian submersions,
Preprint  arXiv :1607.04571.
\vt
\bibitem{li-wang} X. Li,  K. Wang: Nonparametric hypersurfaces moving by powers of Gauss curvature,
Michigan Math. J. {\bf 66} (2017), no. 4, 675--682.
\vt
\bibitem{lira-martin} J. H. Lira, F. Martín: Translating solitons in Riemannian products.
J. Differential Equations {\bf 266}, (2019), 7780--7812.
\vt
\bibitem{martinetal} F. Martín, A. Savas-Halilaj,  K. Smoczyk: On the topology of translating solitons of the
mean curvature flow, Calc. Var. Partial Differential Equations {\bf 54} (2015), no. 3, 2853--2882.
\vt
\bibitem{spruck-xiao} J. Spruck, L. Xiao: Complete translating solitons to the mean curvature flow in $\R^3$ with
nonnegative mean curvature, arXiv:1703.01003v3, (2017).
\vt
\bibitem{urbas} J. Urbas: Complete noncompact self-similar solutions of Gauss curvature flows. I. Positive
powers, Math. Ann. {\bf 311} (1998), no. 2, 251--274.
\end{thebibliography}
\end{document}